\newtheorem{theorem}{Theorem}[section]
\newtheorem{lemma}{Lemma}[section]
\theoremstyle{definition}
\newtheorem{definition}{Definition}
\newtheorem{remark}{Remark}
\numberwithin{equation}{section}
\begin{document}

\long\def\symbolfootnote[#1]#2{\begingroup%
\def\thefootnote{\fnsymbol{footnote}}\footnote[#1]{#2}\endgroup}
\title{New Characterizations of $\mathcal{N}(p,q,s)$ spaces on the unit ball of $\mathbb{C}^n$.}
\author{Athanasios Beslikas}
\maketitle

\begin{abstract}
In this note \footnote{2020 Mathematics Subject Classification: 32A37.
Key words and phrases. $N (p, q, s)$-type spaces. } we provide Holland-Walsh-type characterizations for functions on the $\mathcal{N}(p,q,s)$ spaces on the unit ball for specific values of $p\ge 1.$
Characterizations for the holomorphic function spaces $\mathcal{N}(p,q,s)$
were studied extensively in \cite{Hu} and \cite{Li}. 
 \end{abstract}
 
\maketitle
\section{Introduction and notation} We denote by $z=(z_1,...,z_n), z\in \mathbb{C}^n$ and $|z|^2=|z_1|^2+...+|z_n|^2=\langle z,z\rangle,$
where $\langle.,.\rangle$ denotes the usual inner product that induces the Euclidean norm in $\mathbb{C}^n.$
Let $\mathbb{B}$ denote the open unit ball in $\mathbb{C}^n$, that is $\mathbb{B}=\{z\in \mathbb{C}^n:|z|<1\}$. The class of holomorphic functions on the unit ball will be denoted by $\mathrm{Hol}(\mathbb{B}).$ We denote by $dV(z)$ the usual volume measure normalized over the unit ball, and with
$$dV_{\alpha}(z)=\frac{\Gamma(n+\alpha+1)}{n!\Gamma(\alpha+1)}(1-|z|^2)^{\alpha}dV(z),\alpha\ge -1$$ where $\Gamma(\cdot)$ is the usual Gamma function. For the whole length of our paper we denote by $\Phi_a(z)$ the involutive automorphisms of the unit ball. We have an explicit representation of such functions for $a\in\mathbb{B}\setminus\{0\},$ given by the following formula:
$$\Phi_a(z)=\frac{a-P_az-s_aQ_az}{1-\langle z,a\rangle},z\in\mathbb{B},$$
where $s_a=\sqrt{1-|a|^2}P_a$, $P_a=\frac{\langle z,a\rangle}{|a|^2}$ is the orthogonal projection of $z$ onto the space spanned by $a,$ and $Q_a=I-P_a.$ For more details the reader can check \cite{Rudin}. Now we define the M\"obius invariant measure by
$$d\lambda (z)=(1-|z|^2)^{-n-1}dV(z).$$ It is called M\"obius invariant because of the following equality:
$$\int_{\mathbb{B}}f(z)d\lambda(z)=\int_{\mathbb{B}}f\circ \Phi_a(z)d\lambda(z).$$
Also, by $\nabla f$ we symbolise the complex gradient of a holomorphic function $f$, that is $$\nabla f(z)=\left(\frac{\partial f}{\partial z_1},...,\frac{\partial f}{\partial z_n}\right)$$
and by $\widetilde{\nabla}f$ the M\"obius invariant gradient, that is: $$\widetilde{\nabla} f(z)=\nabla (f\circ \Phi_z)(0).$$ Furthermore, by $\mathcal{R}f(z)$ we denote the radial derivative of $f,$ that is:
$$\mathcal{R}f(z)=\sum_{k=1}^{n}z_k\frac{\partial f}{\partial z_k}.$$
Lastly, whenever we encounter the notation $a\asymp b,$ we simply mean that there exist two positive constants $C_1,C_2$ such that $C_1a\leq b\leq C_2a.$ 
\section{Definitions and tools} In this section we give the definition of the $\mathcal{N}(p,q,s)$ spaces, and some known characterizations. These will constitute our main tools for proving our results.
\begin{definition} \textit{Let $f\in \mathrm{Hol}(\mathbb{B}).$ For $p\ge 1, q>0,s>0$ we define the holomorphic function spaces $\mathcal{N}(p,q,s),$ or $\mathcal{N}_{q,s}^{p}$ by:}
$$\mathcal{N}_{q,s}^{p}=\Biggl\{f\in \mathrm{Hol}(\mathbb{B}):\sup_{a\in\mathbb{B}}\int_{\mathbb{B}}|f(z)|^p(1-|z|^2)^q(1-|\Phi_a(z)|^{2})^{ns}d\lambda(z)<+\infty \Biggr\}.$$
\end{definition}
For different values of the parameters $p,q,s$ we obtain different function spaces. The reader can check \cite{Hu} for such examples and more details about the $\mathcal{N}(p,q,s)$ spaces. For instance, if we could allow $\mathcal{N}(2,0,s)$, $\frac{n-1}{n}<s<1$ we would receive the classical $Q_s$ spaces, and if the parameter $s$ satisfied $1<s<\frac{n}{n-1},$ then we would receive the Bloch space $\mathcal{B}$ on the unit ball. By definition, we are not allowed to do so, and this shows that the theory of $Q_s$ spaces is completely independent to the one of $\mathcal{N}(p,q,s)$ spaces. Some basic information about the mentioned function spaces is that we already know that for $p\ge 1,q,s>0$ the set of polynomials is dense in $\mathcal{N}(p,q,s),$ if and only if $ns+q>n.$ Also for $p\ge 1,q,s>0$ we know that these spaces are functional Banach spaces. For the purpose of our work, we will now mention some known characterizations. 
\begin{theorem} \textit{Let $f\in \mathrm{Hol}(\mathbb{B})$ and $p\ge 1, q>0$ and $s>\max \{0,1-\frac{q}{n}\}.$ Then $f\in \mathcal{N}(p,q,s)$ if and only if:}
\begin{align}
 I_1=&\sup_{a\in \mathbb{B}}\int_{\mathbb{B}}|\nabla f(z)|^p(1-|z|^2)^{p+q}(1-|\Phi_a(z)|^2)^{ns}d\lambda(z)<+\infty.&\\
 I_2=&\sup_{a\in \mathbb{B}}\int_{\mathbb{B}}|\widetilde{\nabla} f(z)|^p(1-|z|^2)^{q}(1-|\Phi_a(z)|^2)^{ns}d\lambda(z)<+\infty.&\\
 I_3=&\sup_{a\in \mathbb{B}}\int_{\mathbb{B}}|\mathcal{R}f(z)|^p(1-|z|^2)^{p+q}(1-|\Phi_a(z)|^2)^{ns}d\lambda(z)<+\infty.&
\end{align}
\end{theorem}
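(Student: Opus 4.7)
The strategy is to split the equivalence $f \in \mathcal{N}(p,q,s) \iff I_1, I_2, I_3 < \infty$ into two logically independent stages: first compare the three derivative-weighted integrals to one another, and second bridge one of them with the defining quantity $\sup_{a}\int|f|^{p}(1-|z|^{2})^{q}(1-|\Phi_{a}(z)|^{2})^{ns}\,d\lambda(z)$.

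For the first stage, Rudin's pointwise identity (see \cite{Rudin})
$$|\widetilde{\nabla} f(z)|^{2}=(1-|z|^{2})\bigl(|\nabla f(z)|^{2}-|\mathcal{R}f(z)|^{2}\bigr)$$
gives at once $|\widetilde{\nabla} f(z)|\le (1-|z|^{2})^{1/2}|\nabla f(z)|$ and $|\mathcal{R}f(z)|\le|\nabla f(z)|$, disposing of one half of each comparison. The reverse inequalities cannot be obtained pointwise, since the weight in $I_{1}$ carries a full $(1-|z|^{2})^{p}$ that is not visible in $I_{2}$. Here I would use the standard subharmonic averaging: for a holomorphic $g$, the Cauchy estimate over a pseudohyperbolic ball of fixed radius $r$ centred at $z$ yields $|\nabla g(z)|^{p}(1-|z|^{2})^{p}$ controlled by a local $L^{p}$-average of $g$ on $D(z,r)$. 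Applied to the translates $g = f\circ\Phi_{z}$, and recalling $\widetilde{\nabla}f(z) = \nabla(f\circ\Phi_{z})(0)$, this inverts the gradient inequalities in an averaged sense and produces the factor $(1-|z|^{2})^{p}$ through the Jacobian of $\Phi_{z}$. The comparison $I_{1}\asymp I_{3}$ is handled by the same averaging trick, combined with the elementary splitting $\mathbb{B} = \{|z|<1/2\}\cup\{|z|\ge 1/2\}$, since on the outer region one has $|\nabla f|\lesssim|\mathcal{R}f|+|\widetilde{\nabla}f|/(1-|z|^{2})$ while the inner region is absorbed by trivial pointwise bounds.

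For the second stage, connecting any of the three with the definition, the direction ``definition $\Rightarrow I_{1}$'' again uses subharmonicity of $|f|^{p}$: over a pseudohyperbolic ball $D(z,r)$,
$$|\nabla f(z)|^{p}\le \frac{C}{(1-|z|^{2})^{p+n+1}}\int_{D(z,r)}|f(w)|^{p}\,dV(w),$$
and multiplying through by the weights, applying Fubini, and exploiting the near-constancy of $(1-|\Phi_{a}(w)|^{2})$ on $D(z,r)$ reduces $I_{1}$ to a constant multiple of the defining quantity. The reverse ``$I_{3}\Rightarrow$ definition'' uses the Hardy-Littlewood type formula $f(z)=f(0)+\int_{0}^{1}\mathcal{R}f(tz)\frac{dt}{t}$ (or a Forelli-Rudin reproducing identity with a suitable parameter), Hölder's inequality in $t$, and the Forelli-Rudin estimates for $\int_{\mathbb{B}}(1-|w|^{2})^{a}|1-\langle z,w\rangle|^{-b}\,dV(w)$.

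The principal obstacle is keeping every constant uniform in $a\in\mathbb{B}$ throughout both stages. Each change of variable $w=\Phi_{a}(z)$ must be tracked carefully via $1-|\Phi_{a}(z)|^{2}=(1-|a|^{2})(1-|z|^{2})/|1-\langle z,a\rangle|^{2}$, so that the supremum in $a$ actually survives the manipulations. This is precisely where the hypothesis $s>\max\{0,1-q/n\}$ is used: it is the exact threshold that guarantees convergence of the Forelli-Rudin test integrals appearing in the reproducing step, with an upper bound independent of $a$. A secondary subtlety is the vanishing of $\mathcal{R}f$ at the origin; this only affects the region $\{|z|<1/2\}$, where any of $|f|^{p}$, $|\nabla f|^{p}$, $|\widetilde{\nabla}f|^{p}$ is bounded by a local sup of $|f|$ via Cauchy's formula and is therefore harmless.
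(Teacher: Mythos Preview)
The paper does not prove this theorem at all: its entire proof reads ``The proofs for all of the 3 characterizations can be found in \cite{Li}.'' So there is no in-paper argument to compare against; the result is imported as a black box from Hu--Li.

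Your sketch is the standard route and is essentially what one finds in the cited literature (and in Zhu \cite{Zhu} for the analogous Bergman/Bloch statements). The two-stage split --- first $I_1\asymp I_2\asymp I_3$ via the identity $|\widetilde{\nabla}f(z)|^{2}=(1-|z|^{2})(|\nabla f(z)|^{2}-|\mathcal{R}f(z)|^{2})$ together with local Cauchy/subharmonic averaging over Bergman balls, then linking one of them to the defining integral by the same averaging in one direction and the radial-integral representation $f(z)=f(0)+\int_{0}^{1}\mathcal{R}f(tz)\,dt/t$ plus Forelli--Rudin in the other --- is exactly the machinery used in \cite{Li} and \cite{Hu}. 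Your identification of $s>\max\{0,1-q/n\}$ as the threshold for the Forelli--Rudin integrals to converge uniformly in $a$ is also the correct reason for that hypothesis. One small bibliographic note: the gradient identity you attribute to Rudin is more readily found as Lemma~2.14 in Zhu \cite{Zhu}; Rudin has the ingredients but not the statement in that packaged form.
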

\begin{proof} The proofs for all of the 3 characterizations can be found in \cite{Li}.
\end{proof}
The next Theorem that we will state is of particular interest for us.
\begin{theorem}\textit{Suppose $f\in \mathrm{Hol}(\mathbb{B}) ,p\ge 1,q>0$ and also $s>\max\{0,1-\frac{q}{n}\}, \alpha>q+ns-n-1.$ Then $f\in\mathcal{N}(p,q,s)$ if and only if:}
$$\sup_{a\in\mathbb{B}}\int_{\mathbb{B}}\int_{\mathbb{B}}\frac{|f(z)-f(w)|^p}{|1-\langle z, w \rangle|^{2(n+1+\alpha)}}(1-|z|^2)^{q}(1-|\Phi_{a}(z)|^2)^{ns}dV_{\alpha}(z)dV_{\alpha}(w)<+\infty$$
\end{theorem}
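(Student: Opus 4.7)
The proof will split naturally into two implications, each reduced to the gradient characterization $I_1$ from Theorem~1.1. Throughout write $J$ for the double-integral quantity appearing in the statement.

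\textit{Sufficiency.} To deduce $f\in\mathcal{N}(p,q,s)$ from $J<\infty$, I would localise on pseudo-hyperbolic balls. Fix $r\in(0,1)$ and set $D(z,r)=\{w\in\mathbb{B}:|\Phi_z(w)|<r\}$. On $D(z,r)$ one has the standard comparisons $|1-\langle z,w\rangle|\asymp 1-|z|^2\asymp 1-|w|^2$ together with $|D(z,r)|_{dV}\asymp(1-|z|^2)^{n+1}$. A Cauchy/sub-mean-value estimate for the holomorphic function $w\mapsto f(w)-f(z)$ on $D(z,r)$ yields
$$|\nabla f(z)|^p(1-|z|^2)^{p+n+1}\lesssim\int_{D(z,r)}|f(z)-f(w)|^p\,dV(w).$$
Restricting the inner integration in $J$ to $D(z,r)$ and inserting these comparisons, the exponents collapse to exactly $(1-|z|^2)^{p+q}(1-|\Phi_a(z)|^2)^{ns}\,d\lambda(z)$ in front of $|\nabla f(z)|^p$, giving $I_1\lesssim J$ uniformly in $a$. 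By Theorem~1.1 this places $f$ in $\mathcal{N}(p,q,s)$.

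\textit{Necessity.} Suppose $I_1<\infty$. By convexity of $\mathbb{B}$, the segment $\gamma(t)=(1-t)w+tz$ lies in $\mathbb{B}$, and
$$f(z)-f(w)=\int_0^1\nabla f(\gamma(t))\cdot(z-w)\,dt.$$
Since $p\ge 1$, Jensen's inequality gives $|f(z)-f(w)|^p\le|z-w|^p\int_0^1|\nabla f(\gamma(t))|^p\,dt$. The elementary bound $|z-w|^2\le 2|1-\langle z,w\rangle|$, valid for $z,w\in\mathbb{B}$, cancels $p/2$ powers of the denominator. Pulling the $t$-integration outside by Fubini and performing, for each fixed $t$, the affine change of variable $\xi=(1-t)w+tz$ in $w$ (with $z$ fixed, Jacobian $(1-t)^{-2n}$), the double integral reduces to a single integral of $|\nabla f(\xi)|^p$ against a Bergman-type kernel in $\xi$ and $z$. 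The Forelli--Rudin estimates
$$\int_{\mathbb{B}}\frac{(1-|u|^2)^c}{|1-\langle u,v\rangle|^{n+1+c+t}}\,dV(u)\asymp(1-|v|^2)^{-t},\qquad t>0,\ c>-1,$$
then dispatch the remaining integration and bound $J$ by a constant times $I_1$. The hypothesis $\alpha>q+ns-n-1$ is exactly what keeps the resulting exponents in the admissible range, while $s>\max\{0,1-q/n\}$ ensures compatibility with the factor $(1-|\Phi_a(z)|^2)^{ns}$.

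The principal technical obstacle lies in the necessity direction: the intermediate point $\gamma(t)$ inside $\nabla f$ prevents any direct reduction, and one must carefully orchestrate Fubini, the affine change of variables in $(z,w)$, and the successive applications of Forelli--Rudin so that every intermediate exponent falls within the convergence regime dictated by the hypotheses on $\alpha$ and $s$. By contrast, sufficiency is a relatively clean localisation argument once the sub-mean-value estimate is in place.
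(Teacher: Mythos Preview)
The paper does not supply its own argument for this theorem: its entire proof is the sentence ``See Theorem 5.12 of \cite{Li}''. So there is no in-paper proof to compare against, and one can only assess your sketch on its own merits.

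Your sufficiency direction is the standard localisation and is correct. Restricting the $w$-integral to the Bergman ball $D(z,r)$, using $|1-\langle z,w\rangle|\asymp 1-|z|^2\asymp 1-|w|^2$ there together with the sub-mean-value gradient estimate, the powers of $1-|z|^2$ indeed collapse to exactly the $I_1$ integrand.

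The necessity direction, however, has a real gap. After Jensen and the bound $|z-w|^p\lesssim|1-\langle z,w\rangle|^{p/2}$ you propose the affine substitution $\xi=(1-t)w+tz$ in $w$ followed by a Forelli--Rudin integration in $z$. Two concrete obstructions appear. First, the weight $(1-|\Phi_a(z)|^2)^{ns}$ is attached to $z$, not to the point $\xi$ where $\nabla f$ now sits; a Forelli--Rudin estimate in $z$ cannot absorb this factor, and your outline contains no mechanism for transferring it to $\xi$, yet that is precisely what $I_1$ demands. Second, under the substitution one gets
\[
1-\langle z,w\rangle=\frac{(1-\langle z,\xi\rangle)-t(1-|z|^2)}{1-t},
\]
and $(1-|w|^2)^{\alpha}$ becomes equally awkward; neither is a Forelli--Rudin kernel in $z$ or in $\xi$, and the accumulated $(1-t)$-powers from the Jacobian $(1-t)^{-2n}$ and from these factors do not visibly produce a finite $\int_0^1 dt$. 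The straight-segment technique is well suited to Bloch/Lipschitz sup-norm estimates but does not mesh with the kernel $|1-\langle z,w\rangle|^{-2(n+1+\alpha)}$. The arguments in the literature (Hu--Li, and the analogous $Q_p$ result of Li--Wulan) instead use the M\"obius change $w=\Phi_z(u)$ together with the identity $|1-\langle z,\Phi_z(u)\rangle|=(1-|z|^2)/|1-\langle z,u\rangle|$ (cf.\ (4.22) in this very paper), which reduces the double integral directly to a quantity controlled by the $\mathcal{N}(p,q,s)$ norm without any $t$-integration.
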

\begin{proof} See Theorem 5.12 of \cite{Li}.
\end{proof}
\section{Motivation and statement of results}
 Historically, the problem started from \cite{Holland} and the Holland-Walsh characterization of the classical and well studied Bloch space $\mathcal{B}$ on the unit disc. Later, in \cite{Stroethoff}, Stroethoff, in a much simpler and elegant way, gave the Holland-Walsh characterization for the Bloch space and also provided a similar characterization for the Besov spaces $B_p$ on the unit disc $\mathbb{D},$ for $p\ge 2.$   After defining and studying extensively the Hardy, Bergman, Besov and Dirichlet-type spaces on the unit ball of $\mathbb{C}^n$, some similar double integral characterizations emerged for the cases of Dirichlet, Bergman, Besov, and Bloch-type spaces, and this was followed up by similar characterizations for the $Q_s$ spaces. Later, the authors of \cite{Li} treated partially the case of $\mathcal{N}(p,q,s)$ spaces. In the present note, we provide three characterizations that have not been found in \cite{Li}, that are similar to the ones presented in \cite{Lii}, \cite{Michalska}, \cite{Pavlovic} for specific values of the parameter $p.$ Let us now state the main results.
\begin{theorem}\textit{Let $f\in \mathrm{Hol}(\mathbb{B}) ,q>0$. Suppose also that the parameters $p,q,s$ satisfy: $\alpha>q+ns-n-1,$ $s>\max\{0,1-\frac{q}{n}\}$ and $p\ge2(n+1+\alpha).$ Then, $f\in \mathcal{N}(p,q,s),$ if and only if:}
$$\sup_{a\in\mathbb{B}}\int_{\mathbb{B}}\int_{\mathbb{B}}\frac{|f(z)-f(w)|^p}{|z-w|^{2(n+1+\alpha)}}(1-|z|^2)^{q}(1-|\Phi_{a}(z)|^2)^{ns}dV_{\alpha}(w)dV_{\alpha}(z)<+\infty.$$
\end{theorem}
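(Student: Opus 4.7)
Plan. I will treat the forward and reverse implications separately, both using Theorem 2.2 as a bridge. The central pointwise tool is the elementary inequality
\[
|z-w|^2 = |z|^2+|w|^2-2\,\mathrm{Re}\langle z,w\rangle \le 2\,\mathrm{Re}(1-\langle z,w\rangle) \le 2\,|1-\langle z,w\rangle|,
\]
valid on $\overline{\mathbb{B}}\times\overline{\mathbb{B}}$ since $|z|,|w|\le 1$; raised to the $(n+1+\alpha)$-th power this gives $|1-\langle z,w\rangle|^{n+1+\alpha}\ge 2^{-(n+1+\alpha)}|z-w|^{2(n+1+\alpha)}$. Because the two denominators $|1-\langle z,w\rangle|^{2(n+1+\alpha)}$ (Theorem 2.2) and $|z-w|^{2(n+1+\alpha)}$ (our statement) are not globally comparable, both directions will further require a splitting of $\mathbb{B}\times\mathbb{B}$ according to a pseudohyperbolic ball $E(z,r)=\{w:|\Phi_z(w)|<r\}$ with $r\in(0,1)$ fixed.

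For the direction $f\in\mathcal{N}(p,q,s)\Rightarrow$ the new integral is finite: on $E(z,r)$ one has $|z-w|\lesssim (1-|z|^2)^{1/2}$ and $(1-|w|^2)\asymp(1-|z|^2)\asymp |1-\langle z,w\rangle|$. The mean-value estimate $|f(z)-f(w)|\le |z-w|\sup_{E(z,r')}|\nabla f|$ combined with the hypothesis $p\ge 2(n+1+\alpha)$ absorbs the factor $|z-w|^{p-2(n+1+\alpha)}$ into a positive power of $(1-|z|^2)$; an application of the subharmonic mean-value inequality to $|\nabla f|^p$ together with Fubini then dominates this piece by the characterization $I_1$ of Theorem 2.1. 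On $\mathbb{B}\setminus E(z,r)$, the reverse estimate $|1-\langle z,w\rangle|^2\ge (1-r^2)^{-1}(1-|z|^2)(1-|w|^2)$ combined with $|z-w|^2\le 2|1-\langle z,w\rangle|$ allows us to compare $|z-w|^{-2(n+1+\alpha)}$ with $|1-\langle z,w\rangle|^{-2(n+1+\alpha)}$ up to harmless boundary-weight factors, reducing this piece directly to the integral in Theorem 2.2.

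For the converse, assuming the new integral is finite I will verify the $I_1$ condition of Theorem 2.1. The Cauchy-type inequality applied to the holomorphic function $f-f(z)$ on the Euclidean ball $D(z,c(1-|z|^2))\subset\mathbb{B}$ gives
\[
|\nabla f(z)|^p\lesssim (1-|z|^2)^{-p-2n}\int_{D(z,c(1-|z|^2))}|f(w)-f(z)|^p\,dV(w),
\]
and the trivial bound $|z-w|^{2(n+1+\alpha)}\le c'(1-|z|^2)^{2(n+1+\alpha)}$ on this ball lets us artificially insert the new denominator on the right. Multiplying by $(1-|z|^2)^{p+q}(1-|\Phi_a(z)|^2)^{ns}$, converting $dV$ to $dV_\alpha$ via $(1-|w|^2)^\alpha\asymp(1-|z|^2)^\alpha$ on the small ball, integrating against $d\lambda(z)$, and applying Fubini produces a bound of $I_1$ by a constant multiple of the double integral in our statement.

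The step I expect to be most delicate is the weight bookkeeping in the converse direction: the naive Euclidean Cauchy estimate is loose by a factor of $(1-|z|^2)^{n-1}$ in complex dimension $n\ge 2$, so a sharper mean-value inequality on Koranyi/Bergman-metric balls, or equivalently a weighted Bergman reproducing identity for $\nabla f$ via the kernel $(1-\langle z,w\rangle)^{-(n+1+\alpha)}$, will be needed to close this gap. The hypothesis $p\ge 2(n+1+\alpha)$ plays a double role throughout: it keeps $|z-w|^{p-2(n+1+\alpha)}$ locally integrable in the forward direction, and it is precisely the threshold that aligns the weight exponents on both sides in the converse.
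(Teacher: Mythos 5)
Your plan is genuinely different from the paper's proof (which never splits $\mathbb{B}\times\mathbb{B}$: it substitutes $w=\Phi_z(w)$, plays the two inequalities $|z-\Phi_z(w)|\ge |w|(1-|z|^2)/|1-\langle z,w\rangle|$ and $|z-\Phi_z(w)|^2\le 2(1-|z|^2)/|1-\langle z,w\rangle|$ against the Jacobian $k_z(w)$, and invokes the Li--Wulan--Zhu type Lemma 4.2 for $F_z(w)=f(z)-f\circ\Phi_z(w)$, $F_z(0)=0$, to reduce both directions to Theorem 2.3), but as written both of your implications have gaps. In the direction $f\in\mathcal{N}(p,q,s)\Rightarrow$ finiteness, your near-diagonal piece does not close: from $|f(z)-f(w)|\le|z-w|\sup_{E(z,r')}|\nabla f|$, $|z-w|\lesssim(1-|z|^2)^{1/2}$ and $\int_{E(z,r)}dV_\alpha(w)\asymp(1-|z|^2)^{n+1+\alpha}$, the sub-mean-value/Fubini step leaves you with $\sup_a\int_{\mathbb{B}}|\nabla f(z)|^p(1-|z|^2)^{p/2+q+\alpha}(1-|\Phi_a(z)|^2)^{ns}dV(z)$, while $I_1$ of Theorem 2.1 only controls the weight $(1-|z|^2)^{p+q-n-1}$; domination of your bound by $I_1$ needs $p/2+q+\alpha\ge p+q-n-1$, i.e. $p\le 2(n+1+\alpha)$, so under the theorem's hypothesis the comparison runs the wrong way except at the endpoint $p=2(n+1+\alpha)$. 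The point is that $p\ge 2(n+1+\alpha)$ must be used to cancel the diagonal singularity against the vanishing of $f(z)-f(w)$ at $w=z$ (this is exactly what Lemma 4.2 does after the pull-back, where the diagonal becomes the single point $w=0$), not merely to make $|z-w|^{p-2(n+1+\alpha)}$ bounded; a workable local substitute would be $|f(z)-f(w)|\lesssim|\Phi_z(w)|\sup_{E(z,r)}|\widetilde{\nabla}f|$ combined with $|z-w|\ge\tfrac12|\Phi_z(w)|(1-|z|^2)$ (inequality (4.1)), which leaves $|\Phi_z(w)|^{p-2(n+1+\alpha)}\le 1$ and lands on $I_2$. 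Also, for the far piece the inequality $|z-w|^2\le 2|1-\langle z,w\rangle|$ is an upper bound on $|z-w|$ and cannot control $|z-w|^{-2(n+1+\alpha)}$; what you actually need there is $|z-w|\ge|\Phi_z(w)|\,|1-\langle z,w\rangle|\ge r|1-\langle z,w\rangle|$ off $E(z,r)$, which follows from $|\Phi_z(w)|^2|1-\langle z,w\rangle|^2=|z-w|^2-(|z|^2|w|^2-|\langle z,w\rangle|^2)$.

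In the converse direction you concede the decisive point yourself: the Euclidean Cauchy estimate misses by $(1-|z|^2)^{n-1}$, and the ``sharper mean-value inequality'' you appeal to is never supplied, so this half is a plan rather than a proof. It is not a routine bookkeeping fix: replacing the Euclidean ball by the Bergman ball $E(z,r)$ trades the volume gain for a worse insertion of the kernel (on $E(z,r)$ one only has $|z-w|\lesssim(1-|z|^2)^{1/2}$), and the analogous computation then misses by $(1-|z|^2)^{n+1+\alpha}$; so verifying $I_1$ (or $I_2$) by a pointwise derivative estimate of this kind does not obviously work at all. The paper's sufficiency argument avoids derivatives entirely: it pulls back by $\Phi_z$, uses (4.2) to cancel the kernel against $k_z(w)$, restricts to $E(z,r)$ where $|1-\langle z,w\rangle|\asymp 1-|z|^2$, and applies the sub-mean-value property to $|F_z(\cdot)|^p$ so as to bound the $\mathcal{N}(p,q,s)$-seminorm of $f$ directly. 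In summary, the forward implication contains a step that fails precisely under the stated hypothesis $p\ge 2(n+1+\alpha)$ (for $n\ge 2$, away from the endpoint), and the backward implication is missing its main lemma; the Möbius pull-back together with Lemma 4.2 (or an invariant-gradient substitute) is the ingredient your outline lacks.
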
  
\begin{remark} Actually, because of the fact that $\alpha>q+ns-n-1$ the Theorem can hold for $p>2(q+ns).$ 
\end{remark}
Our second Theorem is quite similar to the first one:
\begin{theorem}\textit{Let $f\in \mathrm{Hol}(\mathbb{B}) ,q>0$. Suppose also that the parameters $p,q,s$ satisfy: $\alpha>q+ns-n-1,$ $s>\max\{0,1-\frac{q}{n}\}$ and $p\ge2(n+1+\alpha).$ Then, $f\in \mathcal{N}(p,q,s),$ if and only if:}
\begin{multline}\sup_{a\in\mathbb{B}}\int_{\mathbb{B}}\int_{\mathbb{B}}\frac{|f(z)-f(w)|^p}{|w-P_w z-s_wQ_w z|^{2(n+1+\alpha)}}(1-|z|^2)^{q}(1-|\Phi_{a}(z)|^2)^{ns}dV_{\alpha}(w)dV_{\alpha}(z)<+\infty.\end{multline}
\end{theorem}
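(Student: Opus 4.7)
The plan hinges on the elementary identity
\[
|w-P_w z-s_w Q_w z| \;=\; |\Phi_w(z)|\,|1-\langle z,w\rangle|,
\]
which follows directly from the definition of $\Phi_w$, together with the algebraic computation
\[
|w-P_w z-s_w Q_w z|^2 \;=\; |z-w|^2 \;-\; \bigl(|z|^2|w|^2-|\langle z,w\rangle|^2\bigr).
\]
Cauchy--Schwarz applied to the latter yields the pointwise inequality $|w-P_w z-s_w Q_w z|\leq |z-w|$, and the former reduces the new kernel to the one appearing in Theorem 2.2 up to the extra factor $|\Phi_w(z)|^{2(n+1+\alpha)}$.

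For the easy direction (the displayed condition $\Rightarrow f\in\mathcal{N}(p,q,s)$), the pointwise bound $|w-P_w z-s_w Q_w z|\leq |z-w|$ shows that the integrand of the statement dominates the integrand of Theorem 3.1, so membership in $\mathcal{N}(p,q,s)$ is immediate from Theorem 3.1.

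For the converse, assume $f\in\mathcal{N}(p,q,s)$ and rewrite the integrand, using the key identity, as
\[
\frac{|f(z)-f(w)|^p}{|\Phi_w(z)|^{2(n+1+\alpha)}\,|1-\langle z,w\rangle|^{2(n+1+\alpha)}}\,(1-|z|^2)^q\,(1-|\Phi_a(z)|^2)^{ns}.
\]
I would split the domain of integration into $E_1=\{|\Phi_w(z)|\geq 1/2\}$ and $E_2=\{|\Phi_w(z)|<1/2\}$. On $E_1$ the factor $|\Phi_w(z)|^{-2(n+1+\alpha)}$ is bounded by $4^{n+1+\alpha}$, so the integral over $E_1$ is dominated by a constant times the double integral of Theorem 2.2, hence finite. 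On $E_2$, where $z$ lies in the pseudohyperbolic ball $D(w,1/2)$, one has the standard comparisons $(1-|z|^2)\asymp(1-|w|^2)\asymp|1-\langle z,w\rangle|$ and $(1-|\Phi_a(z)|^2)\asymp(1-|\Phi_a(w)|^2)$; moreover, applying a Cauchy--type estimate to the holomorphic map $u\mapsto f(\Phi_w(u))-f(w)$ yields
\[
|f(z)-f(w)|^p \;\leq\; C\,|\Phi_w(z)|^p\,\sup_{\zeta\in D(w,3/4)}|\widetilde{\nabla}f(\zeta)|^p.
\]
The hypothesis $p\geq 2(n+1+\alpha)$, combined with $|\Phi_w(z)|<1$, gives $|\Phi_w(z)|^p\leq|\Phi_w(z)|^{2(n+1+\alpha)}$, which cancels the singularity. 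Replacing the pointwise supremum by an $L^p$ average over a slightly larger invariant ball via subharmonicity, switching the order of integration by Fubini, and changing variables so that the inner integral is taken over $D(w,1)$ reduces the bound to the characterization $I_2$ of Theorem 2.1, which is finite because $f\in\mathcal{N}(p,q,s)$.

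The main obstacle is the analysis on $E_2$: producing the local Cauchy--type estimate with constants uniform in $w\in\mathbb{B}$, passing from the pointwise supremum of $|\widetilde{\nabla}f|^p$ to an integral average in a way that respects the invariant structure, and checking that the factor $(1-|\Phi_a(z)|^2)^{ns}$ does stay comparable to $(1-|\Phi_a(w)|^2)^{ns}$ on $D(w,1/2)$ uniformly in $a$, so that the resulting bound is the sup in $a$ of the $I_2$ integral rather than something worse. Everything else is bookkeeping with the weights and the Forelli--Rudin type integral estimates.
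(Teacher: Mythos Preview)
Your sufficiency direction is fine; the paper also disposes of it in one line, though via the inequality $|w-P_wz-s_wQ_wz|\le|1-\langle z,w\rangle|$ and Theorem~2.2 rather than via $|w-P_wz-s_wQ_wz|\le|z-w|$ and Theorem~3.1. Either works.

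For the necessity your route diverges from the paper's, and the paper's is considerably shorter. Instead of splitting into $E_1$ and $E_2$, the paper writes the kernel as $|\Phi_z(w)|^{-2(n+1+\alpha)}|1-\langle z,w\rangle|^{-2(n+1+\alpha)}$ (exactly your key identity) and then performs the global change of variables $w\mapsto\Phi_z(w)$ in the inner integral. Using $1-\langle\Phi_z(w),z\rangle=(1-|z|^2)/(1-\langle w,z\rangle)$, the Jacobian factor $k_z(w)$ cancels against the $|1-\langle z,w\rangle|$ terms and the whole inner integral collapses to
\[
\int_{\mathbb{B}}\frac{|f\circ\Phi_z(w)-f(z)|^p}{|w|^{2(n+1+\alpha)}}\,dV_\alpha(w).
\]
Now the singularity sits at the origin and is handled in one stroke by Lemma~4.2 (the Hardy-type inequality $\int_{\mathbb{B}}|F|^p|w|^{-2(n+1+\alpha)}dV_\alpha\le C\int_{\mathbb{B}}|F|^p\,dV_\alpha$ for $F(0)=0$ and $p\ge 2(n+1+\alpha)$). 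Undoing the change of variables then lands you exactly on the double integral of Theorem~2.2, and you are done.

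Your $E_1$ piece is of course immediate, but the $E_2$ analysis you outline carries genuine cost: the Cauchy-type bound $|f(z)-f(w)|\le C|\Phi_w(z)|\sup_{D(w,3/4)}|\widetilde{\nabla}f|$ is not literally a standard lemma (relating $\nabla(f\circ\Phi_w)(v)$ for $v\ne 0$ to $|\widetilde{\nabla}f|$ at a nearby point requires an extra argument), and the subsequent passage from a sup to an average, the uniform-in-$a$ comparability of $(1-|\Phi_a(z)|^2)^{ns}$, and the Fubini/covering step all have to be checked. These are probably all salvageable, but the paper's change-of-variables trick sidesteps every one of them: Lemma~4.2 is precisely the device that absorbs the $|\Phi_w(z)|^{-2(n+1+\alpha)}$ singularity globally, so no near/far decomposition, no invariant-gradient estimate, and no appeal to Theorem~2.1 are needed.
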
  
Our third result is straightforward and can be deduced very easily by Theorem 2.3. The motive behind it can be found in \cite{Lii} again.
\begin{definition} \textit{Let $f\in \mathrm{Hol}(\mathbb{B}).$ We define the $p$-Mean Oscillation of $f$ as follows:}
$$MO_{p}(f)(z)=\left(\int_{\mathbb{B}}|f(z)-f(w)|^p\frac{(1-|z|^2)^{n+1}}{|1-\langle z,w \rangle|^{2(n+1)}}dV(w)\right)^{1/p}.$$
\end{definition}
By the previous definition, we get:
\begin{theorem} \textit{Let $f\in \mathrm{Hol}(\mathbb{B}).$ If $p\ge 1, q>0$ and $s>\max \{0,1-\frac{q}{n}\},$ then $f\in N(p,q,s)$ if and only if:}
$$J(f)=\sup_{a\in \mathbb{B}}\int_{\mathbb{B}} MO^p_{p}(f)(z)(1-|z|^2)^q(1-|\Phi_a(z)|^2)^{ns}d\lambda(z)<+\infty.$$
\end{theorem}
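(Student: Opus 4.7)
The plan is to unpack the definition of $MO_p$ inside $J(f)$, apply Fubini, and recognize the resulting expression as the $\alpha=0$ instance of the double-integral characterization already established in Theorem 2.3.

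Starting from Definition 3.1 and substituting into $J(f)$, I obtain
$$J(f)=\sup_{a\in \mathbb{B}}\int_{\mathbb{B}}\left(\int_{\mathbb{B}}|f(z)-f(w)|^p\frac{(1-|z|^2)^{n+1}}{|1-\langle z,w\rangle|^{2(n+1)}}\,dV(w)\right)(1-|z|^2)^q(1-|\Phi_a(z)|^2)^{ns}\,d\lambda(z).$$
The crucial structural observation is that the $(1-|z|^2)^{n+1}$ inside the $MO_p$ kernel was tailor-made to cancel against the factor in $d\lambda(z)=(1-|z|^2)^{-n-1}dV(z)$. After that cancellation, and invoking Fubini (legitimate since the integrand is non-negative), the expression collapses to
$$J(f)=\sup_{a\in \mathbb{B}}\int_{\mathbb{B}}\int_{\mathbb{B}}\frac{|f(z)-f(w)|^p}{|1-\langle z,w\rangle|^{2(n+1)}}(1-|z|^2)^q(1-|\Phi_a(z)|^2)^{ns}\,dV(w)\,dV(z).$$

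The second step is simply pattern matching. At $\alpha=0$ the normalizing constant $\Gamma(n+\alpha+1)/(n!\,\Gamma(\alpha+1))$ equals one, so $dV_0=dV$, and the kernel exponent $2(n+1)$ coincides with $2(n+1+\alpha)$. The displayed double integral is therefore exactly the quantity appearing in Theorem 2.3 with $\alpha=0$, and that theorem delivers the desired equivalence $J(f)<+\infty \Longleftrightarrow f\in \mathcal{N}(p,q,s)$.

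The one delicate point, and the step I expect to require extra care, is that Theorem 2.3 demands $\alpha>q+ns-n-1$, so the choice $\alpha=0$ is admissible only when $q+ns<n+1$. Combined with the standing hypothesis $s>\max\{0,1-q/n\}$, which already forces $q+ns>n$, the clean Fubini argument above covers the range $n<q+ns<n+1$. Outside this window one would apply Theorem 2.3 with a strictly positive admissible $\alpha$ and then compare to $J(f)$ by absorbing the extra weights $(1-|z|^2)^\alpha(1-|w|^2)^\alpha$ and the shifted kernel exponent via standard Forelli--Rudin-type estimates on $\mathbb{B}$; this auxiliary comparison is the only place where any genuine estimation would be needed if one wants the conclusion in the full parameter range suggested by the statement.
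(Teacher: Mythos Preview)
Your argument is exactly the paper's: substitute the definition of $MO_p^p$, cancel the factor $(1-|z|^2)^{n+1}$ against the weight in $d\lambda$, and invoke Theorem~2.3 at $\alpha=0$. Your caveat about the admissibility condition $\alpha>q+ns-n-1$ (which forces $q+ns<n+1$ when $\alpha=0$) is in fact more careful than the paper itself, which simply sets $\alpha=0$ without commenting on this restriction.
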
 
\section{Some Lemmata and proofs}
Initially, we provide the reader with some lemmata that will constitute our main tools for the proof of Theorem 3.1. Let $\Phi_z(w),z,w\in\mathbb{B},z\neq w,$ be an involutive automorphism of the unit ball on $\mathbb{C}^n.$ As in \cite{Li},\cite{Lii} and \cite{Liii}, we consider the \textit{Bergman-pseudometric} on the unit ball as:
$$d(z,w)=|\Phi_z(w)|.$$ We denote by$$E(z,r)=\{w\in \mathbb{B}:|\Phi_z(w)|<r\}$$
the \textit{Bergman-metric ball,} centered at $z$ with radius $r>0.$ The following well known Lemma holds:
\begin{lemma} \textit{For any $r>0$ and $z\in \mathbb{B}$, let $E(z,r)$ be the Bergman metric ball centered at $z.$ Then:}
$$(1-|z|^2)\asymp(1-|w|^2)\asymp|1-\langle z,w\rangle|$$
\textit{for all $z \in \mathbb{B}, w\in E(z,r).$}
\end{lemma}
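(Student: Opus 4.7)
The plan is to deduce all three comparabilities from the single fundamental identity for involutive automorphisms of the unit ball, namely
\[
1-|\Phi_a(z)|^2 \;=\; \frac{(1-|a|^2)(1-|z|^2)}{|1-\langle z,a\rangle|^2},
\]
which is standard (see \cite{Rudin}). Setting $a=z$ and using $w\in E(z,r)$, we get $|\Phi_z(w)|<r$, hence
\[
\frac{(1-|z|^2)(1-|w|^2)}{|1-\langle z,w\rangle|^2} \;=\; 1-|\Phi_z(w)|^2 \;>\; 1-r^2.
\]
This is the whole engine of the argument; the rest is a two-line juggle.

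Next, I would record the elementary lower bound $|1-\langle z,w\rangle|\geq 1-|z||w|\geq 1-|z|\geq \tfrac{1}{2}(1-|z|^2)$, and by symmetry $|1-\langle z,w\rangle|\geq \tfrac{1}{2}(1-|w|^2)$. This handles the easy direction of the third comparability. For the matching upper bound and for the comparability of $1-|z|^2$ with $1-|w|^2$, I would substitute this lower bound back into the inequality from the previous paragraph: rearranging gives
\[
|1-\langle z,w\rangle|^2 \;<\; \frac{(1-|z|^2)(1-|w|^2)}{1-r^2},
\]
and combining with $|1-\langle z,w\rangle|\geq \tfrac{1}{2}(1-|z|^2)$ yields $(1-|z|^2)\leq \tfrac{4}{1-r^2}(1-|w|^2)$; the reverse inequality follows by swapping the roles of $z$ and $w$ (the Bergman pseudometric is symmetric since $\Phi_z$ is involutive, so $w\in E(z,r)$ iff $z\in E(w,r)$).

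With $1-|z|^2\asymp 1-|w|^2$ in hand, the upper bound $|1-\langle z,w\rangle|^2<\frac{(1-|z|^2)(1-|w|^2)}{1-r^2}\leq C(r)(1-|z|^2)^2$ completes $|1-\langle z,w\rangle|\asymp 1-|z|^2$. There is no real obstacle here: the only thing to watch is that all implied constants depend on $r$ (blowing up as $r\to 1^-$, which is consistent with the geometry of the Bergman ball), and the symmetry of the pseudometric must be invoked to avoid repeating the computation twice. Since the lemma is purely geometric and independent of any function-theoretic hypothesis on $f$, this short chain of estimates gives the full conclusion.
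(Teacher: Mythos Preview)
Your argument is correct and is essentially the standard proof of this lemma; the paper itself gives no proof and simply refers the reader to \cite{Zhu}, where the same identity-based computation appears. One small imprecision: the symmetry $|\Phi_z(w)|=|\Phi_w(z)|$ of the Bergman pseudometric does not follow from the involutivity of $\Phi_z$ per se, but rather directly from the identity $1-|\Phi_z(w)|^2=\dfrac{(1-|z|^2)(1-|w|^2)}{|1-\langle z,w\rangle|^2}$, whose right-hand side is manifestly symmetric in $z$ and $w$; since you have already invoked this identity, the fix is immediate.
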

\begin{proof}
The proof can be found in \cite{Zhu}.
\end{proof}
\begin{lemma} \textit{Let $f\in \mathrm{Hol}(\mathbb{B}).$ If $f(0)=0$ then, for all $p\ge 2(n+1+\alpha)$, $\alpha\ge -1,$ there exists a positive constant $C>0$ such that:}
$$\int_{\mathbb{B}}\frac{|f(w)|^p}{|w|^{2(n+1+a)}}dV_a(w)\leq C\int_{\mathbb{B}}|f(w)|^pdV_{\alpha}(w).$$
\end{lemma}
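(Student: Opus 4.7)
The plan is to split the ball according to the size of $|w|$ and treat the singularity of the weight $|w|^{-2(n+1+\alpha)}$ separately from the rest. Set $E_1=\{w\in\mathbb{B}:|w|\ge 1/2\}$ and $E_2=\{w\in\mathbb{B}:|w|<1/2\}$. On $E_1$ the weight is harmless: $|w|^{-2(n+1+\alpha)}\le 2^{2(n+1+\alpha)}$, hence
$$\int_{E_1}\frac{|f(w)|^p}{|w|^{2(n+1+\alpha)}}\,dV_\alpha(w)\le 2^{2(n+1+\alpha)}\int_{\mathbb{B}}|f(w)|^p\,dV_\alpha(w),$$
so on $E_1$ the required inequality is immediate with no use of the hypothesis $f(0)=0$.

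All of the real work is on $E_2$, and this is where the assumption $f(0)=0$ enters. The idea is to apply a Schwarz-type lemma along complex lines. For each unit vector $\zeta\in\partial\mathbb{B}$ the function $t\mapsto f(t\zeta)$ is a holomorphic map of the unit disc into $\mathbb{C}$ with value zero at the origin; applying the usual one-variable Schwarz lemma on the disc $|t|<3/4$ (after dividing by $M_\zeta:=\sup_{|t|=3/4}|f(t\zeta)|$) yields
$$|f(w)|\le C\,|w|\,\sup_{|u|\le 3/4}|f(u)|\qquad\text{for all }|w|\le 1/2,$$
with an absolute constant $C>0$. Raising this to the $p$-th power and inserting it into the integral over $E_2$, the crucial point is that $p\ge 2(n+1+\alpha)$ together with $|w|<1/2$ gives $|w|^{p-2(n+1+\alpha)}\le 1$, so
$$\int_{E_2}\frac{|f(w)|^p}{|w|^{2(n+1+\alpha)}}\,dV_\alpha(w)\le C^p\Big(\sup_{|u|\le 3/4}|f(u)|^p\Big)\int_{E_2}|w|^{p-2(n+1+\alpha)}\,dV_\alpha(w)\le C^p\sup_{|u|\le 3/4}|f(u)|^p.$$

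To close the argument I would invoke the standard pointwise Bergman estimate: since $|f|^p$ is plurisubharmonic, the sub-mean-value inequality applied on a Euclidean ball of fixed radius contained in $\mathbb{B}$ gives $\sup_{|u|\le 3/4}|f(u)|^p\lesssim \int_{\mathbb{B}}|f|^p\,dV_\alpha$, the implied constant depending only on $n$ and $\alpha$. Combining the $E_1$ and $E_2$ estimates yields the desired bound.

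I do not expect a serious obstacle here: the proof is essentially two routine ingredients (Schwarz's lemma along slices and a pointwise Bergman estimate on a compact subset) glued together by the decomposition of $\mathbb{B}$. The only point requiring a small amount of care is the pointwise estimate when $\alpha$ is negative, but since $\alpha\ge -1$ and the estimate is only used on the compact set $\{|u|\le 3/4\}$, the factor $(1-|u|^2)^\alpha$ stays bounded above and below by positive constants, so no difficulty arises.
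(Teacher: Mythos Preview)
Your argument is correct. The decomposition into $E_1$ and $E_2$, the Schwarz-lemma bound $|f(w)|\le C|w|\sup_{|u|\le 3/4}|f(u)|$ on $E_2$, the use of the hypothesis $p\ge 2(n+1+\alpha)$ to kill the factor $|w|^{p-2(n+1+\alpha)}$, and the final sub-mean-value estimate on the compact set $\{|u|\le 3/4\}$ all go through exactly as you describe. The only cosmetic point is that your constant also depends on $p$ (through $C^p$), not just on $n$ and $\alpha$; this is harmless since $p$ is fixed.

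Your route is genuinely different from the paper's. The paper does not argue from scratch: it simply invokes Lemma~2.2 of Li--Wulan--Zhu \cite{Lii}, which already contains the Hardy-type inequality with $|w|^p$ in the denominator, and then observes that for $|w|<1$ and $p\ge 2(n+1+\alpha)$ one has $|w|^{-2(n+1+\alpha)}\le |w|^{-p}$, so the desired bound follows. In other words, the paper's ``proof'' is a one-line reduction to an external reference. Your approach, by contrast, is entirely self-contained: it replaces the cited Hardy inequality by an elementary Schwarz-lemma argument along complex lines, combined with a pointwise Bergman estimate on a compact subset. What you gain is independence from \cite{Lii} and a transparent explanation of where the threshold $p\ge 2(n+1+\alpha)$ enters (it is exactly what makes $|w|^{p-2(n+1+\alpha)}$ bounded near the origin). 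What the paper's approach buys is brevity, and in fact the cited lemma gives a slightly sharper statement (denominator $|w|^p$ rather than $|w|^{2(n+1+\alpha)}$), though that extra strength is not used here.
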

\begin{proof} The Lemma holds for $p=2(n+1+\alpha)$ (see lemma 2.2. of \cite{Lii}). So for all $w\in \mathbb{B},$ we have the trivial inequality:
$$\frac{1}{|w|^{2(n+1+\alpha)}}\leq \frac{1}{|w|^{p}},$$
for all $p\ge 2(n+1+\alpha).$
\end{proof}
\begin{lemma} \textit{Let $z,w \in \mathbb{B}, z\neq w.$ Then, the following inequalities are true:}
\begin{equation}
|z-\Phi_z(w)|\ge \frac{|w|(1-|z|^2)}{|1-\langle z,w \rangle |}
\end{equation}
\begin{equation}
|z-\Phi_z(w)|^2\leq \frac{2(1-|z|^2)}{|1-\langle z,w \rangle|}
\end{equation}
\end{lemma}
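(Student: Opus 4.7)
The plan is to start from the explicit representation of $\Phi_z(w)$ given in Section 1 and obtain a clean closed-form expression for $z-\Phi_z(w)$, from which both inequalities will follow by elementary manipulations. Substituting $\Phi_z(w)=(z-P_z w-\sqrt{1-|z|^2}\,Q_z w)/(1-\langle w,z\rangle)$ into $z-\Phi_z(w)$, bringing everything over the common denominator $1-\langle w,z\rangle$, and using $z\langle w,z\rangle=|z|^2 P_z w$ (which is just the definition of the projection $P_z w=\langle w,z\rangle z/|z|^2$), the numerator collapses to $(1-|z|^2)P_z w+\sqrt{1-|z|^2}\,Q_z w$. Since $P_z w$ lies in the one-dimensional subspace spanned by $z$ while $Q_z w$ lies in its orthogonal complement, these two vectors are orthogonal in $\mathbb{C}^n$, so an expansion of the squared norm yields
$$|z-\Phi_z(w)|^2=\frac{(1-|z|^2)^2|P_z w|^2+(1-|z|^2)|Q_z w|^2}{|1-\langle z,w\rangle|^2}.$$

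To obtain (4.1), I would bound the coefficient of $|Q_z w|^2$ from below by $(1-|z|^2)^2$ (legitimately, since $1-|z|^2\le 1$), combine both terms into $(1-|z|^2)^2(|P_z w|^2+|Q_z w|^2)=(1-|z|^2)^2|w|^2$, and take square roots.

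For (4.2), I would instead factor a single $1-|z|^2$ out of the numerator and rewrite the remaining expression as
$$(1-|z|^2)|P_z w|^2+|Q_z w|^2=|w|^2-|z|^2|P_z w|^2=|w|^2-|\langle z,w\rangle|^2,$$
using $|P_z w|=|\langle z,w\rangle|/|z|$ together with $|w|^2=|P_z w|^2+|Q_z w|^2$. Since $|w|^2\le 1$ and the elementary inequality $1-|a|^2=(1-|a|)(1+|a|)\le 2|1-a|$ holds for all $a\in\mathbb{C}$ with $|a|\le 1$, applying it with $a=\langle z,w\rangle$ bounds the above quantity by $2|1-\langle z,w\rangle|$, and (4.2) follows after dividing by the remaining factor $|1-\langle z,w\rangle|^2$. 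The only step that requires any care is the algebraic identification $(1-|z|^2)|P_z w|^2+|Q_z w|^2=|w|^2-|\langle z,w\rangle|^2$; once that is in place, the argument is essentially a symbolic calculation and I do not foresee any real obstacle.
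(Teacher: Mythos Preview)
Your argument is correct. The closed-form identity
\[
|z-\Phi_z(w)|^2=\frac{(1-|z|^2)^2|P_z w|^2+(1-|z|^2)|Q_z w|^2}{|1-\langle z,w\rangle|^2}
\]
is derived cleanly, and both inequalities follow exactly as you say: for (4.1) one simply uses $(1-|z|^2)\ge(1-|z|^2)^2$, and for (4.2) the algebraic identification $(1-|z|^2)|P_zw|^2+|Q_zw|^2=|w|^2-|\langle z,w\rangle|^2$ combined with $|w|^2\le1$ and $1-|a|^2\le2|1-a|$ (valid since $|\langle z,w\rangle|<1$) gives the stated bound.

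Regarding comparison with the paper: the paper does not actually prove this lemma at all, but merely refers the reader to Zhu's monograph. Your proposal therefore supplies a complete self-contained proof where the paper gives none. The route you take is the standard one (and is essentially what one finds in Zhu): compute $z-\Phi_z(w)$ explicitly, exploit the orthogonality of $P_zw$ and $Q_zw$, and finish with elementary estimates. There is no genuine gap.
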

\begin{proof} The proof can be found again in \cite{Zhu}.
\end{proof} 
We are ready to proceed with the proof of Theorem 3.1.
\begin{proof} (Proof of Theorem 3.1.): \textit{Sufficiency}: Let $f\in \mathrm{Hol}(\mathbb{B}),$ and $p,q,s,\alpha$ as in the statement of the Theorem. Initially, for the convenience of the reader we set:
\begin{equation}
I_a(f)=\int_{\mathbb{B}}\int_{\mathbb{B}}\frac{|f(z)-f(w)|^p}{|z-w|^{2(n+1+\alpha)}}(1-|z|^2)^{q}(1-|\Phi_{a}(z)|^2)^{ns}dV_{\alpha}(w)dV_{\alpha}(z)
\end{equation}
We assume firstly that $\sup_{a\in\mathbb{B}}I_a(f)<\infty.$ We have to prove that $f\in \mathcal{N}(p,q,s).$ For the upcoming calculations, let 
$$k_z(w)=\frac{(1-|z|^2)^{n+1+\alpha}}{|1-\langle z, w\rangle|^{2(n+1+\alpha)}}, z,w\in\mathbb{B}.$$
We will estimate $I_a(f)$ from below. To do so, we apply firstly a change of variables $w=\Phi_z(w):$
\begin{multline}
    I_a(f)=\int_{\mathbb{B}}\int_{\mathbb{B}}\frac{|f(z)-f\circ \Phi_z(w)|^p}{|z-\Phi_z(w)|^{2(n+1+\alpha)}}\times\\(1-|z|^2)^q(1-|\Phi_a(z)|^2)^{ns}k_z(w)dV_{\alpha}(z)dV_{\alpha}(w)
    \end{multline}
For convenience in the upcoming calculations, set $F_z(w)=f(z)-f\circ \Phi_z(w),$ and $n+1+\alpha=\gamma.$
By applying Fubini's Theorem, and Lemma 4.3 (inequality (4.2)), we get:
\begin{multline}
I_a(f)\ge C\int_{\mathbb{B}}(1-|z|^2)^{q}(1-|\Phi_a(z)|^2)^{ns}dV_a(z) \times \\ \left(\int_{\mathbb{B}}\frac{|F_z(w)|^p}{\frac{(1-|z|^2)^{n+1+\alpha}}{|1-\langle z,w \rangle|^{n+1+\alpha}}}\frac{(1-|z|^2)^{n+1+\alpha}}{|1-\langle z,w \rangle|^{2(n+1+\alpha)}}dV_{\alpha}(w)\right).
\end{multline}
Fix $0<r<1,$ and take $E(z,r)\subset{\mathbb{B}}$ be the domain of the second integral:   
\begin{align}
I_a(f)\ge&C\int_{\mathbb{B}}(1-|z|^2)^q(1-|\Phi_a(z)|^2)^{ns}dV_a(z)\left(\int_{\mathbb{B}}\frac{|F_z(w)|^p}{|1-\langle z,w\rangle |^{\gamma}}dV_{\alpha}(w)\right)&&\\
\ge&C\int_{\mathbb{B}}(1-|z|^2)^q(1-|\Phi_a(z)|^2)^{ns}dV_a(z)\left(\int_{E(z,r)}\frac{|F_z(w)|^p}{|1-\langle z,w\rangle |^{\gamma}}dV_{\alpha}(w)\right)&&\\
\ge&C\int_{\mathbb{B}}(1-|z|^2)^q(1-|\Phi_a(z)|^2)^{ns}dV_a(z)\left(\int_{E(z,r)}\frac{|F_z(w)|^p}{(1-|z|^2)^{\gamma}}dV_{\alpha}(w)\right).&&
\end{align}
At this point, we move $(1-|z|^2)^{-\gamma}$ to the outer integral. It is now obvious that we will replace $(1-|z|^2)^{-n-1-a}dV_a(z)$ with $d\lambda(z):$
\begin{align}
I_a(f)\ge C\int_{\mathbb{B}}(1-|z|^2)^q(1-|\Phi_a(z)|^2)^{ns}d\lambda(z) \left(\int_{E(z,r)}|F_z(w)|^pdV_{\alpha}(w)\right).&&
\end{align}
By the topology induced by the Bergman pseudometric, we can find $\rho<r$ and a Euclidean ball $B(z,\rho)\subset E(z,r)$. By applying the plurisubharmonicity property for the function $|F_z(w)|^p,p\ge 1$ we deduce:
\begin{align}
I_a(f)  &\ge C\int_{\mathbb{B}}|F_z(z)|^p(1-|z|^2)^{q}(1-|\Phi_a(z)|^2)^{ns}d\lambda(z)&&\\
        &=C\int_{\mathbb{B}}|f(z)-f(0)|^p(1-|z|^2)^{q}(1-|\Phi_a(z)|^2)^{ns}d\lambda(z).
\end{align}
By taking the supremum for $a\in\mathbb{B}$ in (4.10) we proved:
\begin{align}
+\infty>\sup_{a\in\mathbb{B}}I_a(f) \ge&\sup_{a\in\mathbb{B}}\int_{\mathbb{B}}|f(z)|^{p}(1-|z|^2)^q(1-|\Phi_a(z)|^2)^{ns}d\lambda(z)=||f||_{\mathcal{N}},
\end{align}
where $||f||_{\mathcal{N}}$ denotes the pseudo-norm induced by the definition of the $\mathcal{N}(p,q,s)$ spaces.\\\\
\textit{Necessity:} We assume that $f\in \mathcal{N}(p,q,s).$ We have to show that $\sup_{a\in\mathbb{B}}I_a(f)$ is bounded. We recall that$$F_z(w)=f(z)-f\circ\Phi_z(w), z,w\in\mathbb{B},$$ and we observe that $F_z(0)=0.$ We begin by applying Fubini's Theorem and a change of variables $w=\Phi_z(w),$ as before:
\begin{multline}
I_a(f)=\int_{\mathbb{B}}(1-|z|^2)^{q}(1-|\Phi_a(z)|^2)^{ns}dV_a(z)\times\\ \left(\int_{\mathbb{B}}\frac{|F_z(w)|^p}{|z-\Phi_z(w)|^{2(n+1+\alpha)}}\frac{(1-|z|^2)^{n+1+\alpha}}{|1-\langle z,w \rangle|^{2(n+1+\alpha)}}dV_{\alpha}(w)\right).
\end{multline}
Now we apply the inequality (4.1) and we get:
\begin{multline}
I_a(f)\leq \int_{\mathbb{B}}(1-|z|^2)^{q}(1-|\Phi_a(z)|^2)^{ns}dV_a(z)\times\\ \left(\int_{\mathbb{B}}\frac{|F_z(w)|^p}{\frac{|w|^{2(n+1+\alpha)}(1-|z|^2)^{2(n+1+\alpha)}}{|1-\langle z,w \rangle|^{2(n+1+\alpha)}}}\frac{(1-|z|^2)^{n+1+\alpha}}{|1-\langle z,w \rangle|^{2(n+1+\alpha)}}dV_{\alpha}(w)\right).
\end{multline}
After we simplify the equal terms in the numerator and denominator, we move the term $(1-|z|^2)^{-(n+1+\alpha)}$ to the outer integral, and we obtain
$$I_a(f)\leq \int_{\mathbb{B}}(1-|z|^2)^{q-(n+1+\alpha)}(1-|\Phi_a(z)|^2)^{ns}dV_a(z)\int_{\mathbb{B}}
\frac{|F_z(w)|^p}{|w|^{2(n+1+\alpha)}}dV_{\alpha}(w).$$
Now we apply Lemma 4.2 for the function $F_z(w).$ After another change of variables we deduce:
\begin{align}
I_a(f)\leq &C\int_{\mathbb{B}}(1-|z|^2)^{q-(n+1+\alpha)}(1-|\Phi_a(z)|^2)^{ns}dV_{\alpha}(z)\int_{\mathbb{B}}|F_z(w)|^pdV_a(w)&\\
    =& C\int_{\mathbb{B}}(1-|z|^2)^{q}(1-|\Phi_a(z)|^2)^{ns}dV_{\alpha}(z)\int_{\mathbb{B}}\frac{|f(z)-f(w)|^p}{|1-\langle z,w \rangle|^{2(n+1+a)}}dV_a(w)&\\
    =&C\int_{\mathbb{B}}\int_{\mathbb{B}}\frac{|f(z)-f(w)|^p}{|1-\langle z ,w\rangle|^{2(n+1+\alpha)}}(1-|z|^2)^q(1-|\Phi_a(z)|^2)^{ns}dV_{\alpha}(z)dV_{\alpha}(w),
\end{align}
hence $I_a(f)<+\infty,$ by Theorem 2.3.   
\end{proof}
We proceed now with the proof of Theorem 3.2.
\begin{proof} (Proof of Theorem 3.2) (\textit{Sufficiency}:) Suppose that:
\begin{multline}
\sup_{a\in\mathbb{B}}\int_{\mathbb{B}}\int_{\mathbb{B}}\frac{|f(z)-f(w)|^p}{|w-P_w z-s_wQ_w z|^{2(n+1+\alpha)}}\times \\(1-|z|^2)^{q}(1-|\Phi_{a}(z)|^2)^{ns}dV_{\alpha}(w)dV_{\alpha}(z)<+\infty.
\end{multline}
Then, by the trivial inequality:
$$\frac{1}{|1-\langle z,w \rangle|}\leq \frac{1}{|w-P_w z-s_wQ_w z|}$$ and Theorem 2.3 we deduce that $f\in \mathcal{N}(p,q,s).$\\\\
(\textit{Necessity}:) Let $f\in \mathcal{N}(p,q,s).$  We will estimate from above the following integral:
\begin{multline}
J_a(f)=\int_{\mathbb{B}}\int_{\mathbb{B}}\frac{|f(z)-f(w)|^p}{|w-P_w z-s_wQ_w z|^{2(n+1+\alpha)}}\times \\(1-|z|^2)^{q}(1-|\Phi_{a}(z)|^2)^{ns}dV_{\alpha}(w)dV_{\alpha}(z).
\end{multline}
Initially, we observe:
\begin{multline}
J_a(f)=\int_{\mathbb{B}}\int_{\mathbb{B}}\frac{|f(z)-f(w)|^p}{|\Phi_z(w)|^{2(n+1+a)}|1-\langle z,w\rangle|^{2(n+1+\alpha)}}\times \\(1-|z|^2)^{q}(1-|\Phi_{a}(z)|^2)^{ns}dV_{\alpha}(w)dV_{\alpha}(z).
\end{multline}
We apply the change of variables $w=\Phi_z(w),$ and Fubini's Theorem as in the proof of the previous Theorem:
\begin{multline}
I_a(f)=\int_{\mathbb{B}}(1-|z|^2)^q(1-|\Phi_a(z)|^2)^{ns}dV_{\alpha}(z)\times \\ \left(\int_{\mathbb{B}}\frac{|f\circ\Phi_z(w)-f\circ\Phi_z(0)|^pk_z(w)}{|w|^{2(n+1+\alpha)}|1-\langle \Phi_z(w),z \rangle |^{2(n+1+\alpha)}}dV_{\alpha}(w)\right)
\end{multline}
By the properties of the involutive automorphisms of the unit ball, we know that the following identity holds (e.g. [8].):
\begin{equation}
\frac{1}{|1-\langle \Phi_z(w),z \rangle|^{2(n+1+\alpha)}}=\frac{|1-\langle z,w\rangle|^{2(n+1+\alpha)}}{(1-|z|^2)^{2(n+1+\alpha)}}.\end{equation}
By applying (4.22) to (4.21) we obtain:
\begin{multline}
J_a(f)=\int_{\mathbb{B}}(1-|z|^2)^q(1-|\Phi_a(z)|^2)^{ns}d\lambda(z)\times \\ \left(\int_{\mathbb{B}}\frac{|f\circ\Phi_z(w)-f\circ\Phi_z(0)|^p}{|w|^{2(n+1+\alpha)}}dV_{\alpha}(w)\right).
\end{multline}
Recall, once again, the previous notation $F_z(w)=f\circ\Phi_z(w)-f\circ\Phi_z(0).$ At this stage, we will apply Lemma 4.2. By doing so, we obtain a positive constant $C>0$ such that:
\begin{align}
J_a(f)\leq C\int_{\mathbb{B}}(1-|z|^2)^q(1-|\Phi_a(z)|^2)^{ns}d\lambda(z) \left(\int_{\mathbb{B}}|F_z(w)|^pdV_{\alpha}(w)\right).
\end{align}
By repeating the change of variables and applying Theorem 2.3, we obtain:
\begin{multline}
\sup_{a\in\mathbb{B}}J_a(f) \leq\sup_{a\in\mathbb{B}}\int_{\mathbb{B}}\int_{\mathbb{B}}\frac{|f(z)-f(w)|^p}{|1-\langle z, w \rangle|^{2(n+1+\alpha)}}\times \\(1-|z|^2)^{q}(1-|\Phi_{a}(z)|^2)^{ns}dV_{\alpha}(z)dV_{\alpha}(w)<+\infty\end{multline}
and the second implication follows.
\end{proof}
\begin{proof} (Proof of Theorem 3.3:) Observe that: 
$$MO_p^{p}(f)=\int_{\mathbb{B}}\frac{|f(z)-f(w)|^p}{|1-\langle z,w\rangle|^{2(n+1)}}(1-|z|^2)^{n+1}dV(w).$$
From Theorem 2.3. and by choosing $\alpha=0,$ we know that $f\in N(p,q,s) $ if and only if
\begin{multline}
J(f)=\sup_{a\in\mathbb{B}}\int_{\mathbb{B}}\int_{\mathbb{B}}\frac{|f(z)-f(w)|^p}{|1-\langle z, w \rangle|^{2(n+1)}}\times\\(1-|z|^2)^{q}(1-|\Phi_{a}(z)|^2)^{ns}dV(z)dV(w)<+\infty.
\end{multline}
We write the integral of $J(f)$ as an iterated one and we observe that $f\in N(p,q,s)$ if and only if
\begin{multline}
\sup_{a\in \mathbb{B}}\int_{\mathbb{B}}\left(\int_{\mathbb{B}}\frac{|f(z)-f(w)|^p}{|1-\langle z,w\rangle|^{2(n+1)}}(1-|z|^2)^{n+1}dV(w)\right)\times\\(1-|z|^2)^q(1-|\Phi_a(z)|^2)^{ns}d\lambda(z)<+\infty
\end{multline}
which gives us the desired result.
\end{proof}
\section{Discussion and questions} In this final section we pose some comments and questions for the interested reader. The motivation behind our question is clear. We proved Theorem 3.1 and Theorem 3.2. for $p\ge2(n+1+\alpha),\alpha \ge1.$ \\\\
\textbf{Question 1:} \textit{Can we obtain Theorem 3.1 for more general values of the parameter $p\ge 1?$ What happens for $ 1\leq p<2(n+1+\alpha)?$}\\\\
The answer to this question seems to be technical, as in this case we cannot apply Lemma 4.2 as in our proofs.

Jagiellonian University\\
Institute of Mathematics,\\ \L{}ojasiewicza 6\\
30-348 Krak\'ow, Poland\\
athanasios.beslikas@doctoral.uj.edu.pl

\end{document}